\theoremstyle{plain}
\newtheorem{proposition}{Proposition}
\theoremstyle{remark}
\newtheorem{remark}{Remark}
\newcommand{\addresseshere}{%
  \enddoc@text\let\enddoc@text\relax
}
\begin{document}

\title[Rotational Uniqueness Conditions]
{Rotational Uniqueness Conditions Under \\ Oblique Factor Correlation Metric}

\author[C.F.W.\ Peeters]{Carel F.W.\ Peeters}
\address[Carel F.W.\ Peeters]{
Dept.\ of Epidemiology \& Biostatistics \\
Amsterdam Public Health research institute \\
VU University medical center Amsterdam \\
Amsterdam\\
The Netherlands}
\email{cf.peeters@vumc.nl}

\begin{abstract}\label{abstract}
\noindent In an addendum to his seminal 1969 article J\"{o}reskog
stated two sets of conditions for rotational identification of the
oblique factor solution under utilization of fixed zero elements in
the factor loadings matrix \citep{Jog79}. These condition sets,
formulated under factor correlation and factor covariance metrics,
respectively, were claimed to be equivalent and to lead to global
rotational uniqueness of the factor solution. It is shown here that
the conditions for the oblique factor correlation structure need to
be amended for global rotational uniqueness, and hence, that the
condition sets are not equivalent in terms of unicity of the
solution.

\bigskip \noindent \footnotesize {\it Key words}: Factor analysis;
Oblique rotation; Rotational uniqueness; Unrestricted factor model
\end{abstract}

\maketitle

\section{Introduction}\label{IntIdent}
Suppose
$\mathbf{\Sigma}=\mathbf{\Lambda}\mathbf{\Phi}\mathbf{\Lambda}^{\mathrm{T}}+\mathbf{\Psi}$
is the usual oblique factor analysis model. Here,
$\mathbf{\Lambda}\in\mathbb{R}^{p\times m}$ is a matrix of factor
loadings in which each element $\lambda_{jk}$ is the loading of the
$j$th variable on the $k$th factor, $j=1,\ldots,p$, $k=1,\ldots,m$;
$\mathbf{\Phi}\in\mathbb{R}^{m\times m}$ denotes the factor
covariance matrix; and $\mathbf{\Psi}\in\mathbb{R}^{p\times p}\equiv
\mbox{diag}[\psi_{11},\ldots, \psi_{pp}]$ contains the error
variances. In the remainder we will assume the usual regularity
assumptions: (a) rank$(\mathbf{\Lambda})=m$; (b) $\psi_{jj}>0
~\forall j$; and (c) $(p-m)^{2}-p-m\geqslant0$, simply stating
nonnegative degrees of freedom for existence.

As is well-known, the factor model is inherently underidentified,
implying that $\mathbf{\Sigma}$ does not have a unique solution
without imposing restrictions. Given $\mathbf{\Psi}$, two factor
models defined by $\{\mathbf{\Lambda},\mathbf{\Phi}\}$ and
$\{\mathbf{\Lambda}^{\ddag},\mathbf{\Phi}^{\ddag}\}$ are equivalent
if there exists a mapping
$\delta:\{\mathbf{\Lambda},\mathbf{\Phi}\}\longrightarrow\{\mathbf{\Lambda}^{\ddag},\mathbf{\Phi}^{\ddag}\}$
such that
$\mathbf{\Sigma}(\mathbf{\Lambda},\mathbf{\Phi},\mathbf{\Psi})=
\mathbf{\Sigma}[\delta(\mathbf{\Lambda},\mathbf{\Phi}),\mathbf{\Psi}]$.
For the factor model we find
$\mathbf{\Sigma}=(\mathbf{\Lambda}\mathbf{R})[\mathbf{R}^{-1}\mathbf{\Phi}(\mathbf{R}^{\mathrm{T}})^{-1}]
(\mathbf{\Lambda}\mathbf{R})^{\mathrm{T}}+\mathbf{\Psi}$, where
$\mathbf{R}\in\mathbb{R}^{m\times m}$ is an arbitrary nonsingular
matrix, implying that there is an infinite number of alternative
matrices $\mathbf{\Lambda}^{\ddag}=\mathbf{\Lambda}\mathbf{R}$ and
$\mathbf{\Phi}^{\ddag}=\mathbf{R}^{-1}\mathbf{\Phi}(\mathbf{R}^{\mathrm{T}})^{-1}$
that generate the same covariance structure $\mathbf{\Sigma}$. The
operation $\mathbf{\Lambda}\mapsto\mathbf{\Lambda}\mathbf{R}$ is
termed `rotation', and $\mathbf{\Lambda}$ and $\mathbf{\Phi}$ are
said to be globally rotationally unique \emph{iff}
$\mathbf{R}=\mathbf{I}_{m}$.

Finding conditions to ensure (global) rotational uniqueness has been
an active area of research and debate in the factor analysis
community. Building on Howe \citep{How55}, J\"{o}reskog
\citep{jog69} conjectured sufficient conditions for uniqueness
for both the orthogonal ($\mathbf{\Phi}=\mathbf{I}_{m}$) and oblique
factor model through specification of fixed elements in
$\mathbf{\Lambda}$ and $\mathbf{\Phi}$. Dunn \citep{Dun73}
showed, especially for the orthogonal model, that J\"{o}reskog's
conditions were not sufficient. His substitute conditions based on
fixed zero elements are sufficient for local rotational uniqueness
only, in the sense that $2^{m}$ combinations of polarity reversals
in the columns of $\mathbf{\Lambda}$ are allowed, giving that
$\mathbf{R}$ is then $\mbox{diag}[\pm1,\ldots, \pm1]$. Jennrich
\citep{Jenn78} gave sufficient conditions for local rotational
uniqueness under reflections for the orthogonal model when the fixed
elements are arbitrary. These works inspired J\"{o}reskog to write
an addendum to his 1969 article \citep{Jog79}, focussing on
reformulating the sufficiency conditions for the oblique factor
solution with fixed zero elements. He gave the following conditions:
\newcounter{Zcount}
\begin{list}{(C\arabic{Zcount})}
  {\usecounter{Zcount}}
  \item Let $\mathbf{\Lambda}$ have at least $m-1$ fixed zeroes in each column;
  \item Let rank$(\mathbf{\Lambda}^{[k]})=m-1$, where $\mathbf{\Lambda}^{[k]}$, $k=1,\ldots,m$, is the submatrix of
$\mathbf{\Lambda}$, consisting of the rows of $\mathbf{\Lambda}$
which have fixed zero elements in the $k$th column with these zeroes
deleted;
  \item Let $\mathbf{\Phi}$ be a symmetric positive
definite matrix with $\mbox{diag}(\mathbf{\Phi})=\mathbf{I}_{m}$
(i.e., $\mathbf{\Phi}$ is a correlation matrix);
\end{list}
and conjectured that C1-C3 are sufficient for obtaining global
rotational uniqueness. Moreover, he conjectured that conditions C1
and C3 are equivalent to:
\newcounter{Ycount}
\begin{list}{(C*)}
  {\usecounter{Ycount}}
  \item Let $\mathbf{\Lambda}$ have at least $m-1$ fixed
zeroes in each column and one fixed non-zero value in each column,
the latter values being in different rows;
\end{list}
and subsequently proved global rotational uniqueness under pairing
of conditions C2 and C*.

Many recent texts follow J\"{o}reskog \citep{Jog79} in stating
that conditions C1-C3 are sufficient for (rotational) uniqueness
\cite[e.g.,][]{HD04,AM09}. However, it will be shown that conditions
C1-C3 are \emph{not} sufficient for global rotational uniqueness but
local rotational uniqueness only and, hence, that conditions C1 and
C3 are \emph{not} equivalent to C* in terms of unicity of the
solution. Although this result may be implicitly known or be
considered tacit knowledge, here it is made explicit.

In the remainder, condition set C1-C3 will be amended with an
additional condition. It will then be shown that the amended
condition set is sufficient for obtaining global rotational
uniqueness, implying that C1-C3 do not lead to global uniqueness and
the non-equivalence of conditions C1 and C3 to C*. Section
\ref{Disc} concludes with a discussion.

\section{Global Rotational Uniqueness Under $\mbox{diag}(\mathbf{\Phi})=\mathbf{I}_{m}$}\label{Cond}
Consider the following addition to conditions C1-C3:
\newcounter{Xcount}
\begin{list}{(C4)}
  {\usecounter{Xcount}}
  \item Let in each column of $\mathbf{\Lambda}$ one
parameter non-fixed by condition C1 be polarity truncated to take
only positive or negative values, that is: In each column of
$\mathbf{\Lambda}$ one element is to adopt either strict positivity
$(\lambda_{jk}>0)$, or strict negativity $(-\lambda_{jk}>0)$.
\end{list}

\begin{proposition}\label{UniquePropC3}
Let the mapping
$\delta:\{\mathbf{\Lambda},\mathbf{\Phi}\}\longrightarrow\{\mathbf{\Lambda}^{\ddag},\mathbf{\Phi}^{\ddag}\}$
be defined by $\mathbf{\Lambda}^{\ddag}=\mathbf{\Lambda}\mathbf{R}$
and
$\mathbf{\Phi}^{\ddag}=\mathbf{R}^{-1}\mathbf{\Phi}(\mathbf{R}^{\mathrm{T}})^{-1}$,
where $\mathbf{R}\in\mathbb{R}^{m\times m}$ denotes an arbitrary
nonsingular matrix. If conditions \emph{C1}-\emph{C4} hold, then
$\mathbf{R}=\mathbf{I}_{m}$.
\end{proposition}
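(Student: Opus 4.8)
The plan is to peel away the structure of $\mathbf{R}$ in three stages, applying the conditions in the order C1--C2, then C3, then C4, so that each stage confines $\mathbf{R}$ to a strictly smaller class of matrices until only $\mathbf{I}_m$ survives. Throughout I use that equivalence of the two solutions forces $\{\mathbf{\Lambda}^{\ddag},\mathbf{\Phi}^{\ddag}\}$ to satisfy the very same imposed conditions as $\{\mathbf{\Lambda},\mathbf{\Phi}\}$.

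First I would exploit the fixed-zero pattern to force $\mathbf{R}$ to be diagonal. Since $\mathbf{\Lambda}^{\ddag}=\mathbf{\Lambda}\mathbf{R}$ must carry fixed zeros in the prescribed positions, the $k$th column $\mathbf{\Lambda}\mathbf{r}_k$ (with $\mathbf{r}_k$ the $k$th column of $\mathbf{R}$) must vanish in precisely the rows where column $k$ of $\mathbf{\Lambda}$ is fixed to zero. Collecting those rows into the submatrix $\mathbf{\Lambda}^{(k)}$ gives $\mathbf{\Lambda}^{(k)}\mathbf{r}_k=\mathbf{0}$. By construction the $k$th column of $\mathbf{\Lambda}^{(k)}$ is itself zero, so deleting it reduces the system to $\mathbf{\Lambda}^{[k]}\tilde{\mathbf{r}}_k=\mathbf{0}$, where $\tilde{\mathbf{r}}_k$ is $\mathbf{r}_k$ with its $k$th entry removed. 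Condition C2 states that $\mathbf{\Lambda}^{[k]}$ has full column rank $m-1$, whence $\tilde{\mathbf{r}}_k=\mathbf{0}$; that is, every off-diagonal entry in column $k$ of $\mathbf{R}$ vanishes. Running this over $k=1,\ldots,m$ yields $\mathbf{R}=\mathrm{diag}(d_1,\ldots,d_m)$, and nonsingularity forces each $d_k\neq0$. This is exactly J\"{o}reskog's local-uniqueness conclusion.

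Next I would use C3 to fix the magnitudes. With $\mathbf{R}$ diagonal we have $\mathbf{\Phi}^{\ddag}=\mathbf{R}^{-1}\mathbf{\Phi}\mathbf{R}^{-1}$, whose $(i,i)$ entry is $\phi_{ii}/d_i^2=1/d_i^2$ because $\mathbf{\Phi}$ has unit diagonal. For the rotated factor matrix again to be a correlation matrix, its diagonal must be unity, forcing $d_i^2=1$ and hence $d_i=\pm1$. Thus $\mathbf{R}=\mathrm{diag}(\pm1,\ldots,\pm1)$, which displays explicitly the $2^m$ undetermined sign reflections left open by C1--C3 and is precisely the gap the proposition must close.

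Finally I would invoke C4 to kill the remaining sign ambiguity. In column $k$ let $\lambda_{j_k k}$ be the polarity-truncated entry, say prescribed strictly positive. Rotation scales it by $d_k=\pm1$, so $\lambda^{\ddag}_{j_k k}=d_k\lambda_{j_k k}$; for $\mathbf{\Lambda}^{\ddag}$ to satisfy the same sign constraint one needs $d_k\lambda_{j_k k}>0$, and since $\lambda_{j_k k}>0$ this forces $d_k=+1$ (the strictly-negative case is symmetric). Hence every $d_k=1$ and $\mathbf{R}=\mathbf{I}_m$. I expect the only delicate step to be the first: one must track carefully which rows and columns enter $\mathbf{\Lambda}^{(k)}$ and $\mathbf{\Lambda}^{[k]}$, and observe that it is exactly the zero $k$th column of $\mathbf{\Lambda}^{(k)}$ that allows the rank hypothesis C2 to annihilate $\tilde{\mathbf{r}}_k$. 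The second and third stages are then essentially immediate computations.
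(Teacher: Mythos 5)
Your proof is correct and takes essentially the same route as the paper's: conditions C1--C2 force $\mathbf{R}$ to be diagonal, C3 forces the diagonal entries to be $\pm1$, and C4 eliminates the remaining sign reflections. The only difference is presentational: the paper establishes diagonality via permutation matrices and a block lower-right triangular form (following Anderson), whereas you write the same rank argument directly as $\mathbf{\Lambda}^{[k]}\tilde{\mathbf{r}}_k=\mathbf{0}$ for each column $k$, which is a cleaner piece of bookkeeping that also accommodates columns with more than $m-1$ fixed zeros without modification.
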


\begin{proof}
\begin{sloppypar}
Let conditions C1-C4 hold on $\{\mathbf{\Lambda},\mathbf{\Phi}\}$.
We start by showing that $\mathbf{R}$ is diagonal under conditions
C1-C2, which can be shown with an argument analogous to
\citet[pp.~576-577]{AND84}. Under given conditions it is always
possible to find a permutation matrix $\mathbf{P}_{1}$ of respective
dimension $p \times p$, such that $\mathbf{P}_{1}\mathbf{\Lambda}$
gives a block lower right triangular form on
$\mathbf{\Lambda}^{[1]}$. We then have

\begin{equation}\label{blockmatr}
\mathbf{\Lambda}'=\mathbf{P}_{1}\mathbf{\Lambda}=\left[
                                     \begin{array}{cc}
                                       \boldsymbol{0}             & \mathbf{\Lambda}^{[1]} \\
                                       \boldsymbol{\lambda}_{(1)} & \mathbf{\Lambda}_{[1]} \\
                                     \end{array}
                                   \right],
\end{equation}
where $\boldsymbol{0}$ is a $(m-1)$-dimensional null vector,
$\boldsymbol{\lambda}_{(1)} \in\mathbb{R}^{(p-m+1)\times 1}$,
$\mathbf{\Lambda}^{[1]}\in\mathbb{R}^{(m-1)\times (m-1)}$, and
$\mathbf{\Lambda}_{[1]}\in\mathbb{R}^{(p-m+1)\times (m-1)}$. Now,
let
\begin{equation}\nonumber
\mathbf{R}=\left[
                 \begin{array}{cc}
                 r_{11}                       & \boldsymbol{\mathrm{r}}_{12} \\
                 \boldsymbol{\mathrm{r}}_{21} & \mathbf{R}_{22} \\
                 \end{array}
                 \right],
\end{equation}
where $r_{11}$ is a scalar,
$\boldsymbol{\mathrm{r}}_{12}\in\mathbb{R}^{1\times (m-1)}$,
$\boldsymbol{\mathrm{r}}_{21}\in\mathbb{R}^{(m-1)\times 1}$, and
$\mathbf{R}_{22}\in\mathbb{R}^{(m-1)\times (m-1)}$. Then
\begin{equation}\nonumber
(\mathbf{\Lambda}')^{\ddag}=\mathbf{\Lambda}'\mathbf{R}=\left[
                 \begin{array}{cc}
                 \mathbf{\Lambda}^{[1]}\boldsymbol{\mathrm{r}}_{21} & ~~\mathbf{\Lambda}^{[1]}\mathbf{R}_{22} \\
                 r_{11}\boldsymbol{\lambda}_{(1)}+\mathbf{\Lambda}_{[1]}\boldsymbol{\mathrm{r}}_{21} & ~~\boldsymbol{\lambda}_{(1)}\boldsymbol{\mathrm{r}}_{12}+\mathbf{\Lambda}_{[1]}\mathbf{R}_{22} \\
                 \end{array}
                 \right].
\end{equation}
As the rank of $\mathbf{\Lambda}^{[1]}$ is $m-1$,
$\boldsymbol{\mathrm{r}}_{21}$ should be a null vector. We may
follow the same procedure for each respective remaining submatrix
$\mathbf{\Lambda}^{[k]}$. That is, we can find an $(m\times
m)$-dimensional permutation matrix $\mathbf{P}_{2}$ such that
$\mathbf{P}'_{1}\mathbf{\Lambda}\mathbf{P}_{2}$ has a structure as
in (\ref{blockmatr}) with the index $1$ replaced by $k$ (we may
always find a pair of permutation matrices $\mathbf{P}'_{1}$ and
$\mathbf{P}_{2}$, such that
$\mathbf{P}'_{1}\mathbf{\Lambda}\mathbf{P}_{2}$ gives a block lower
right triangular form on $\mathbf{\Lambda}^{[k]}$). From the
accompanying reorderings
$\mathbf{P}_{2}^{\mathrm{T}}\mathbf{R}\mathbf{P}_{2}$ implied by the
identity
\begin{small}
\begin{align}\nonumber
&\mathbf{P}'_{1}\mathbf{\Sigma}(\mathbf{P}'_{1})^{\mathrm{T}}\\\nonumber&=
(\mathbf{P}'_{1}\mathbf{\Lambda}\mathbf{R})[\mathbf{R}^{-1}\mathbf{\Phi}(\mathbf{R}^{\mathrm{T}})^{-1}]
(\mathbf{P}'_{1}\mathbf{\Lambda}\mathbf{R})^{\mathrm{T}}+\mathbf{P}'_{1}\mathbf{\Psi}(\mathbf{P}'_{1})^{\mathrm{T}}\\\nonumber
&=(\mathbf{P}'_{1}\mathbf{\Lambda}\mathbf{P}_{2}\mathbf{P}_{2}^{\mathrm{T}}\mathbf{R}\mathbf{P}_{2})
[\mathbf{P}_{2}^{\mathrm{T}}\mathbf{R}^{-1}\mathbf{P}_{2}\mathbf{P}_{2}^{\mathrm{T}}\mathbf{\Phi}
\mathbf{P}_{2}\mathbf{P}_{2}^{\mathrm{T}}(\mathbf{R}^{\mathrm{T}})^{-1}\mathbf{P}_{2}]
(\mathbf{P}'_{1}\mathbf{\Lambda}\mathbf{P}_{2}\mathbf{P}_{2}^{\mathrm{T}}\mathbf{R}\mathbf{P}_{2})^{\mathrm{T}}
+\mathbf{P}'_{1}\mathbf{\Psi}(\mathbf{P}'_{1})^{\mathrm{T}},
\end{align}
\end{small}
it follows that under conditions C1-C2,
$\mathbf{R}=\mbox{diag}[r_{11},\ldots, r_{mm}]$.
\end{sloppypar}

The properties of diagonal matrices are such that now
$\mathbf{R}^{\mathrm{T}}=\mathbf{R}$ and
$\mathbf{R}^{-1}=\mbox{diag}[r_{11}^{-1},\ldots, r_{mm}^{-1}]$.
Superimposing condition C3 then implies
\begin{equation}\nonumber
\phi_{kk}^{\ddag}=r_{kk}^{-2}\phi_{kk}=r_{kk}^{-2}=1 ~~\forall k,
\end{equation}
giving that $r_{kk}=\pm1$, and $\mathbf{R}=\mbox{diag}[\pm1,\ldots,
\pm1]$.

By demanding that each column of $\mathbf{\Lambda}$ has a polarity
truncation, column multiplication by $-1$ is no longer possible as
this would imply a reversal of the polarity truncation (direction
inequality symbol). Superimposing condition C4 on C1-C3 thus gives
that $\mathbf{R}=\mathbf{I}_{m}$. The proposition follows.
\end{proof}

\begin{remark}
The proof of Proposition \ref{UniquePropC3} implies, contrary to
previous conjectures, that conditions C1-C3 are not sufficient for
global rotational uniqueness as they provide local rotational
uniqueness only and, hence, that conditions C1 and C3 are not
equivalent to C* as the pairing of C2-C* does provide global
rotational uniqueness \citep{Jog79}. Moreover, the proposition
indicates how inequality restrictions can aid in the attainment of
global rotational uniqueness.
\end{remark}

\begin{remark}
Instead of using strict positivity or strict negativity truncations
as formulated in condition C4 we could also use strict polarity
truncations by (arbitrary) constants, that is: Every column of
$\mathbf{\Lambda}$ should contain either
$\lambda_{jk}>c_{k}\in\mathbb{R}^{+}$ or
$-\lambda_{jk}>c_{k}\in\mathbb{R}^{+}$. While this will produce
global rotational uniqueness whence superimposed on C1-C3 along the
same lines as C4, it may not be practical in a research setting. It
may, for example, be possible to specify
$\lambda_{jk}>c_{k}\in\mathbb{R}^{+}$ while the true parameter value
$0\leqslant\lambda_{jk}<c_{k}$ (in the positive reflection). One
would then run into estimation trouble in numerical applications. A
related issue lies in choosing the loading elements for column
polarity fixation. For (Bayesian) estimation efficiency (see
Discussion section) condition C4 should be imposed on loadings that,
from prior knowledge or theory, are believed to be large.
\end{remark}

\begin{remark}
Please note that rotational uniqueness of $\mathbf{\Lambda}$ will
not guarantee identifiability of the FA model \citep{BJ85}, as
underidentification of $\mathbf{\Psi}$ may imply underidentification
of $\mathbf{\Lambda}$. However, if the regularity assumptions stated
in the introduction hold, then conditions C1-C4 will, next to
rotational uniqueness, also provide identifiability. If unsure if
identifiability is obtained, one could endeavor on algebraically
checking (local) identification utilizing the Wald rank rule
\citep{BMW94}.
\end{remark}

\section{Discussion}\label{Disc}
The following question deserves some exploration: What are reasons
to prefer condition set C1-C4 above the pairing C2-C*? Before
delving into possible answers it is stated why the addition of
condition C4 is deemed important when working in factor correlation
metric.

Not attaining global rotational uniqueness in factor correlation
metric under C1-C3 does not hamper maximum likelihood estimation, as
any local minimum has equivalent representations through simple
polarity reflections. In obtaining factor loading standard errors or
Bayesian estimates the situation is a little more intricate as the
parameter space under C1-C3 is multimodal. The modes (defined over
polarity reflections) should be widely separated in order for
unimodal normal approximations or resampling techniques to yield
valid estimates of the standard errors (see,
\citet[e.g.,][]{DolMol}). As Bayesian modeling proceeds through
exploration of posterior space, posterior estimates will be flawed
when transition probability between modes is non-negligible.
Imposing C4 can aid when the modes are not well-separated. In
Bayesian modeling, for example, imposing C4 implies a truncation of
the posterior density and will restrict posterior simulation to a
single mode -- a fact that has been recognized (for the orthogonal
factor model) by Geweke and Zhou \citep{GZ96}.

A first reason for preferring C1-C4 above the pairing C2-C* might be
found in the topic of arbitrary units of measurement. Many
(psychological) tests have units of measurement with no intrinsic
meaning. Let $\mathbf{D}$ be a diagonal matrix with positive
diagonal elements that indicate a change in test score units. Then
$\mathbf{D}\mathbf{\Sigma}\mathbf{D}\equiv\mathbf{\Sigma}^{\dag}$.
If $\mathbf{\Lambda}$ is identified by C1-C4, then
$\mathbf{D}\mathbf{\Lambda}\equiv\mathbf{\Lambda}^{\dag}$ is
similarly identified. If $\mathbf{\Lambda}$ is identified by the
pairing C2-C*, then each column of $\mathbf{D}\mathbf{\Lambda}$ has
to be renormalized \citep[p.~557]{AND84}.

A second reason for preference may be that the condition set C1-C4
is less restrictive on $\mathbf{\Lambda}$. Consider the unrestricted
factor model. Unrestricted solutions correspond to exploratory
factor analysis (EFA) in the sense that only minimal restrictions
are placed on the model to achieve at least a local rotationally
unique solution for $m$ factors. As such, an unrestricted solution
for $m$ common factors does not restrict the factor space and will
yield an optimal fit for any model with $m$ factors \citep[Section
15.4]{Mul10}. In effect, a minimal set of restrictions based on C2
and C* or C1-C4 entails a choice of rotation of the EFA model. The
pairing C2-C* imposes the minimum of $m^{2}$ restrictions, equalling
the number of non-redundant elements in $\mathbf{R}$, on the
parameter space of $\mathbf{\Lambda}$. The condition set C1-C4
imposes only $m(m-1)$ fixed-value restrictions on
$\mathbf{\Lambda}$. The parameters involved in the polarity
truncations are free to be estimated in either the positive or
negative range (note also that the polarity truncations do not
restrict the factor space). The fewer number of restrictions on
$\mathbf{\Lambda}$ under C1-C4 make it a more flexible set for
unrestricted formulations of the (confirmatory) factor model.

\section*{Acknowledgements}
This research was
supported by grant NWO-VICI-453-05-002 of the Netherlands
Organization for Scientific Research (NWO).
It was written while the author was a Ph.D. candidate at the Department of
Methodology and Statistics, Utrecht University, Utrecht, the Netherlands
and is part of the authors' Ph.D. thesis.
This version is a postprint of:
Peeters, C.F.W. (2012). Rotational Uniqueness Conditions under Oblique Factor
Correlation Metric. \emph{Psychometrika}, 77: 288--292.

\bibliographystyle{plainnat}
\bibliography{References}

\vspace{1cm}
\addresseshere

\end{document}